\documentclass[11pt]{article}

\usepackage[T1]{fontenc}
\usepackage[utf8]{inputenc}
\usepackage[serif]{libertinus-type1}
\usepackage{microtype}
\usepackage{amsmath,amssymb,amsthm,mathtools}
\usepackage[a4paper,left=27mm,right=27mm,top=23mm,bottom=25mm,
            headheight=14pt,headsep=12pt,footskip=28pt]{geometry}
\usepackage{xcolor}
\usepackage{fancyhdr}
\usepackage{titlesec}
\usepackage{tocloft}
\usepackage{enumitem}
\usepackage[most]{tcolorbox}
\usepackage[colorlinks=true,
            linkcolor=teal!55!black,
            citecolor=teal!55!black,
            urlcolor=teal!55!black,
            pdfauthor={Self-contained research note},
            pdftitle={A Counting Lower Bound for Maximum Odd Induced Subgraphs},
            pdfsubject={Graph theory research note}]{hyperref}

\newcommand{\fo}{f_{\mathrm{o}}}
\newcommand{\foe}{f_{\mathrm{oe}}}
\newcommand{\Aell}{\mathcal{A}_{\ell}}
\newcommand{\cstar}{c_{*}}
\newcommand{\Acal}{\mathcal A}
\newcommand{\1}{\mathbf 1}
\newcommand{\one}{\mathbf{1}}
\newcommand{\E}{\mathbb E}

\newtheorem{theorem}{Theorem}[section]
\newtheorem{lemma}[theorem]{Lemma}
\newtheorem{corollary}[theorem]{Corollary}

\theoremstyle{remark}
\newtheorem{remark}[theorem]{Remark}

\titleformat{\section}
  {\Large\bfseries}
  {\thesection}
  {1.15em}
  {}
\titlespacing*{\section}{0pt}{2.5ex plus .8ex minus .2ex}{1.2ex}

\setlist[enumerate]{leftmargin=2.2em,itemsep=0.25em,topsep=0.35em}

\title{Weighted Counting Formula and $2n/21$ Lower Bound \\[0.25em]  for Induced Subgraphs with Prescribed Degree Parities}
\author{Gregory Gutin\thanks{Department for Computing, Security and Mathematics, Royal Holloway University of London, {\tt g.gutin@rhul.ac.uk}, and School of Mathematical Sciences and LPMC, Nankai University.}
\hspace{2mm} Yiming Hao\thanks{School of Mathematical Sciences and LPMC, Nankai University. {\tt  
1120230031@mail.nankai.edu.cn}.}
\hspace{2mm} Yacong Zhou\thanks{Shenzhen Institutes of Advanced Technology, Chinese Academy of Sciences. {\tt yacong.zhou96@gmail.com}.}
}

\begin{document}
\maketitle

\begin{abstract}
Let $G=(V,E)$ be a finite simple graph of order $n\geq 1$, and let $\ell:V\to\{0,1\}$ be a prescribed parity labeling. A set $S\subseteq V$ is called $\ell$-admissible if $d_S(v)\equiv \ell(v)\pmod 2$ for every $v\in S$, where $d_S(v)=|N_G(v)\cap S|$. Let $h_\ell(G)$ be the maximum order of an $\ell$-admissible set and let $f_{\rm oe}(G)=\min_\ell h_\ell(G)$. For $x\in\mathbb R$, define the weighted counting polynomial $$
  M_{\ell,x}(G)=\sum_{S\in {\cal A}_\ell(G)}x^{|S|}, $$ where ${\cal A}_\ell(G)$ is the collection of all $\ell$-admissible sets in $G$. For $R\subseteq V$, let $z_\ell(R)$ be the number of vertices $v\in V\setminus R$ for which $d_R(v)\equiv\ell(v)\pmod 2$. We prove the exact identity $$
  M_{\ell,x}(G)
  =2^{-n}\sum_{R\subseteq V}
  x^{|R|}(2+x)^{z_\ell(R)}(2-x)^{n-z_\ell(R)-|R|}. $$ If $G$ has no isolated vertices, then, for every $\ell$ and every $x\in(0,2)$, $
  M_{\ell,x}(G)>x^{n/2}(4-x^2)^{n/4}. $ Combining this estimate with a binary-entropy upper bound and optimizing $x$ gives $$
  f_{\rm oe}(G)>c_*n>\frac{2n}{21}, $$ where $c_*\approx0.095862615$. Ferber and Krivelevich (Adv. Math. 2022) proved that $h_{\mathbf{1}}(G)\ge 10^{-4}n$, where $\mathbf{1}$ is the all-one labeling. Since $h_{\mathbf{1}}(G)\ge f_{\rm oe}(G)$, our result improves coefficient in their bound by almost three orders of magnitude, and does so simultaneously for every labeling.
\end{abstract}

\vspace{-0.35em}

\section{Introduction}
\label{sec:introduction}

Throughout the paper, $G=(V,E)$ is a finite simple graph and ${n=|V|\geq 1}$. A graph is called \emph{odd} if every vertex has odd degree.  Let $\fo(G)$ denote the maximum order of an odd induced subgraph of $G$. A long-standing conjecture, described by Caro~\cite{Caro1994} as part of graph-theoretic folklore, 
asserted that there is an absolute constant $c>0$ such that every $n$-vertex graph without isolated vertices satisfies $\fo(G)\ge cn$.  Caro~\cite{Caro1994} obtained an $\Omega(\sqrt n)$ lower
bound, while Scott~\cite{Scott1992} established an $\Omega(n/\log n)$ bound. Ferber and Krivelevich~\cite{FK} resolved the conjecture by proving
\(
  \fo(G)\ge 10^{-4}n
\)
for every graph $G$ without isolated vertices.

It is natural to ask for the largest universal coefficient.  Define
\[
  c_{\mathrm{o}}
  :=\inf\left\{\frac{\fo(G)}{|V(G)|}:G\text{ is a graph without
  isolated vertices}\right\}.
\]
An example noted by Caro~\cite{Caro1994} gives $c_{\mathrm{o}}\le 2/7$.  For graphs of
maximum degree at most four, Ai, Guo, Gutin, Hao, and Yeo~\cite{Ai+2025}
proved the matching bound $\fo(G)\ge 2n/7$, and showed that it is sharp in
that class.  
{ 
Graphs considered in
\cite{BWW1997,Hou2018,Rao2022} 
do not lower the bound $c_{\mathrm{o}}\le 2/7.$
This raises a natural question: Is $c_{\mathrm{o}}= 2/7$ ?
}

The present paper works with the more general prescribed-parity setting first introduced 
by Gutin and Yeo \cite{GutinYeo2022}. 
For $S\subseteq V$ and $v\in V$, write
$d_S(v):=|N_G(v)\cap S|.$ Given a labeling
$\ell:V\to\{0,1\}$, a set $S\subseteq V$ is called \emph{$\ell$-admissible}
if
$
  d_S(v)\equiv \ell(v)\pmod 2 \quad\text{for every }v\in S.
$
The empty set is regarded as $\ell$-admissible.  Let
$$
  \Aell(G):=\{S\subseteq V:S\text{ is $\ell$-admissible}\},
  \qquad
  h_\ell(G):=\max\{|S|:S\in\Aell(G)\},
$$
and define the worst-labeling parameter
$
  \foe(G):=\min_{\ell:V\to\{0,1\}}h_\ell(G).
$
If $\one$ denotes the all-one labeling, then
$h_{\one}(G)=\fo(G)$, and hence
$
  \foe(G)\le \fo(G).
$
The parameter $\foe$ was studied by Ai, Guo, Gutin, Hao, and
Yeo~\cite{Ai+2026}, who established universal linear bounds and a
number of structural results for prescribed degree parities.

Our approach is based on weighted counting.  For any $x\in\mathbb R$ and labeling $\ell: V\to \{0,1\}$, let
\[
  M_{\ell,x}(G):=\sum_{S\in\Aell(G)}x^{|S|}.
\]
{Note that $M_{\ell,x}(G)$ can be viewed as a weighted counting where every set $S\in\Acal_{\ell}(G)$ is assigned weight $x$ raised to $|S|$
}. Thus $M_{\ell,1}(G)=|\Aell(G)|$, while $M_{\one,1}(G)$ is the number of
vertex sets inducing odd subgraphs, with the empty set counted once.  For
$R\subseteq V$, define
\[
  z_\ell(R)
  :=\bigl|\{v\in V\setminus R:d_R(v)\equiv\ell(v)\pmod 2\}\bigr|.
\]
In Section \ref{sec:1},  we prove the exact identity
\begin{equation}\label{eq:weighted-identity-intro}
  M_{\ell,x}(G)
  =2^{-n}\sum_{R\subseteq V}
    x^{|R|}(2+x)^{z_\ell(R)}(2-x)^{n-z_\ell(R)-|R|}.
\end{equation}
When $G$ is nonempty and has no isolated vertices, the same section uses
\eqref{eq:weighted-identity-intro} and strict Jensen convexity to show that
\begin{equation}\label{eq:jensen-lower-intro}
  M_{\ell,x}(G)>x^{n/2}(4-x^2)^{n/4}
  \qquad (0<x<2).
\end{equation}

{In Section \ref{sec:2}, by applying \eqref{eq:jensen-lower-intro} and a standard entropy upper bound with an optimized $x$, we give a short proof for the following main result, where $\cstar\in (0.0958626150,0.0958626151)$ and will be defined rigorously in Section \ref{sec:2}. }

\begin{theorem}
\label{thm:linear}
If $G$ is a graph without isolated vertices, then
$
  \foe(G)>c_*n>\frac{2n}{21}.
$
\end{theorem}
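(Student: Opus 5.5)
Fix an arbitrary labeling $\ell$ and put $h:=h_\ell(G)$, $\alpha:=h/n$. The plan is to compare a lower and an upper bound on $M_{\ell,x}(G)$. The lower bound is \eqref{eq:jensen-lower-intro}: for every $x\in(0,2)$,
\[
M_{\ell,x}(G)>x^{n/2}(4-x^2)^{n/4}.
\]
For the upper bound, every $S\in\Aell(G)$ has $|S|\le h$. Hence, for $0<x\le 1$,
\[
M_{\ell,x}(G)\le\sum_{k=0}^{h}\binom nk x^k\le \sum_{k\le h}\binom nk .
\]
When $\alpha\le 1/2$ this is at most $2^{nH(\alpha)}$, where $H$ is the binary entropy (the standard bound). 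A sharper variant keeps the factor $x^k$. For $x<1$ the terms $\binom nk x^k$ increase in $k$ while $k\le xn/(1+x)$. So if $\alpha\le x/(1+x)$, then $M_{\ell,x}\le \sum_{k\le \alpha n}\binom nk x^k\le x^{\alpha n}\sum_{k\le\alpha n}\binom nk x^{k-\alpha n}$. Bounding the last sum via $\binom{n}{k}x^{k}\le (1+x)^n$ times a Chernoff factor gives
\[
M_{\ell,x}\le 2^{nH(\alpha)}x^{\alpha n}.
\]
I will use this $x$-weighted form, since it is what makes optimizing $x$ worthwhile.

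Combining the two bounds and taking $n$-th roots, every $x\in(0,1]$ with $\alpha\le x/(1+x)$ satisfies
\[
H(\alpha)\log 2+\alpha\log x>\tfrac12\log x+\tfrac14\log(4-x^2).
\]
This is strict because \eqref{eq:jensen-lower-intro} is strict. I then define $c_*$ as the root of the equation obtained by optimizing $x$. Concretely, set
\[
\Phi(\alpha):=\sup_{x}\Bigl[\tfrac12\log x+\tfrac14\log(4-x^2)-\alpha\log x\Bigr]-H(\alpha)\log 2,
\]
and let $c_*$ be the smallest positive zero of $\Phi$. The inequality reads $\Phi(\alpha)<0$. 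If $\Phi>0$ on $(0,c_*]$ (note $\Phi(0^+)>0$), this forces $\alpha>c_*$, i.e.\ $h_\ell(G)>c_*n$. Since $\ell$ was arbitrary, taking the minimum over $\ell$ gives $\foe(G)>c_*n$.

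The main obstacle is the calculus and numerics. First, the optimal $x$ must lie in the admissible range: $x\le1$ and $\alpha\le x/(1+x)$, which needs roughly $x\ge 0.106$ at $\alpha\approx0.096$. Second, I must verify that $\Phi$ has no zero on $(0,c_*)$, which I would get from monotonicity of $\Phi$ on that interval. Third, I must certify $c_*>2/21\approx0.095238$. For the last step I would choose an explicit rational $x$ near the optimizer and check $\Phi(2/21)>0$ by rigorous interval evaluation. If the weighted entropy bound turns out to be lossy, the fallback is the plain bound $\sum_{k\le h}\binom nk\le 2^{nH(\alpha)}$ with $x=1$ in \eqref{eq:jensen-lower-intro}. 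That gives $3^{1/4}\le 2^{H(\alpha)}$, so $H(\alpha)>\tfrac14\log_2 3\approx0.396$, i.e.\ $\alpha\gtrsim0.078$. This is weaker, which confirms that the $x^{|S|}$ weighting in the upper bound is essential to reach $c_*\approx0.0959$.
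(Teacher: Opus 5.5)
\textbf{There is a genuine gap: restricting to $x\in(0,1]$ is fatal.} Put $F_\alpha(x)=(\tfrac12-\alpha)\ln x+\tfrac14\ln(4-x^2)$. Your combined inequality says $H(\alpha)\ln 2>F_\alpha(x)$ for every admissible $x$. So the strength of your conclusion is set by the largest value of $F_\alpha$ on the admissible range. Now
\[
F_\alpha'(x)=\frac{1/2-\alpha}{x}-\frac{x}{2(4-x^2)}
\]
vanishes only at $x^2=2(1-2\alpha)/(1-\alpha)$. This exceeds $1$ whenever $\alpha<1/3$ (it gives $x\approx 1.337$ at $\alpha\approx 0.096$), so $F_\alpha$ is increasing on $(0,1]$.

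Hence on $(0,1]$ the best choice is $x=1$, where $F_\alpha(1)=\tfrac14\ln 3$ independently of $\alpha$. Your $x$-weighted upper bound with $x<1$ is correct but gains nothing over your fallback. It yields only $H(\alpha)>\tfrac14\log_2 3$, i.e.\ $\alpha\gtrsim 0.078$. Since $H(2/21)\approx 0.4537>\tfrac14\log_2 3\approx 0.3962$, this does not even reach $2/21$.

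So your claim that the optimizer lies in roughly $[0.106,1]$ is false. Your conclusion that weighting the upper bound with $x<1$ drives the improvement is backwards. If the supremum in your $\Phi$ runs over $(0,1]$, its zero is about $0.078$, not $0.0959$.

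\textbf{The fix is to take $x\in(1,2)$.} There the gain comes from the Jensen lower bound $x^{1/2}(4-x^2)^{1/4}$ growing faster than the factor $x^{\alpha}$. For $x>1$ the upper bound is even easier than yours: $x^k\le x^{\alpha n}$ for $k\le\alpha n$, so
\[
M_{\ell,x}(G)\le x^{\alpha n}\sum_{k\le\alpha n}\binom nk\le 2^{nH(\alpha)}x^{\alpha n}\qquad(\alpha\le 1/2).
\]
Your Chernoff argument also applies here, since $\alpha\le 1/2\le x/(1+x)$ when $x\ge 1$.

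Choosing $x=\sqrt{2(1-2\alpha)/(1-\alpha)}\in(1,2)$ gives $\sup_x F_\alpha(x)=\bigl(\tfrac{1-2\alpha}{4}\log_2(1-2\alpha)-\tfrac{1-\alpha}{2}\log_2\tfrac{1-\alpha}{2}\bigr)\ln 2$. With the supremum taken over all of $(0,2)$, your $\Phi$ has a unique zero in $(0,1/2)$, at $\cstar\approx 0.0958626$. With this change your argument becomes essentially the paper's; the paper just fixes $x$ at the value for $\alpha=\cstar$ and argues by contradiction.

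A minor slip: since $\Phi(\cstar)=0$ by definition, what you need is $\Phi\ge 0$ on $(0,\cstar]$, not $\Phi>0$. This follows from the intermediate value theorem, and the strictness comes from the Jensen bound.
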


{Thus, $\fo(G)\ge  \foe(G)>c_*n$, which improves coefficients in the bounds for $\fo(G)$ in~\cite{FK}  and for $\foe(G)$ in ~\cite{Ai+2026} by almost three orders of magnitude. 

Finally, in Section~\ref{sec:disc}, we discuss some open problems on the topic.}


\section{A Weighted Counting Formula and a Lower Bound for $M_{\ell,x}(G)$}\label{sec:1}

\begin{theorem}\label{thm:exact}
For every graph $G=(V,E)$ on $n$ vertices, function $\ell: V\to\{0,1\}$ and { $x\in \mathbb{R}$}, we have
\[
 M_{\ell,x}(G)=2^{-n}\sum_{R\subseteq V}x^{|R|}(2+x)^{z_{\ell}(R)}(2-x)^{n-z_{\ell}(R)-|R|}, 
\]
{with the convention $0^0=1$}.
\end{theorem}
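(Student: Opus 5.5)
We plan to prove the identity by writing the admissibility indicator as a character sum over $\mathbb{F}_2$ and then exchanging the order of summation. For a fixed $S\subseteq V$ and $v\in S$, the condition $d_S(v)\equiv\ell(v)\pmod 2$ has indicator
\[
\tfrac12\bigl(1+(-1)^{d_S(v)+\ell(v)}\bigr).
\]
Hence
\[
M_{\ell,x}(G)=\sum_{S\subseteq V}x^{|S|}\prod_{v\in S}\tfrac12\bigl(1+(-1)^{d_S(v)+\ell(v)}\bigr).
\]
Expanding the product over $v\in S$ amounts to choosing a subset $T\subseteq S$ of vertices that take the sign term. This gives
\[
M_{\ell,x}(G)=\sum_{T\subseteq S\subseteq V}\Bigl(\frac{x}{2}\Bigr)^{|S|}\prod_{v\in T}(-1)^{d_S(v)+\ell(v)}.
\]
The sign is $(-1)^{\sum_{v\in T}\ell(v)+\sum_{v\in T}d_S(v)}$. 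Moreover, $\sum_{v\in T}d_S(v)\equiv\sum_{u\in S}d_T(u)\pmod 2$, since both count the edges between $T$ and $S$ (with edges inside $T$ counted twice, which does not affect parity). This is the double-counting switch that turns a condition on $S$ into a condition on individual vertices relative to a fixed set.

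So the next step is to fix a set and sum over the other freely, so that the weight factorizes over the vertices. The target formula is indexed by $R$, with factors $x$ for $v\in R$, $(2+x)$ for outside vertices with $d_R(v)\equiv\ell(v)$, and $(2-x)$ for the rest, all divided by $2^n$. Since $R$ carries weight $x^{|R|}$, one natural attempt is to set $R=T$, write $S=T\cup U$ with $U\subseteq V\setminus T$, and sum over $U$ vertex by vertex. Then:
\begin{itemize}
\item the factor per $v\in T$ is $\frac{x}{2}(-1)^{\ell(v)+d_T(v)}$;
\item the factor per $u\notin T$ is $1+\frac{x}{2}(-1)^{d_T(u)}$.
\end{itemize}
This yields
\[
2^{-n}\sum_{T}x^{|T|}(-1)^{\sum_{v\in T}(\ell(v)+d_T(v))}\prod_{u\notin T}\bigl(2\pm x\bigr),
\]
where the sign $\pm$ is governed by the parity of $d_T(u)$ rather than by $d_T(u)+\ell(u)$. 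This is close to the target, but it has a leftover global sign and the labels sit on the wrong vertices.

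To correct this, I would substitute $x\mapsto -x$ suitably, or better, expand the indicator differently from the start. One option is to introduce the factor $\prod_{v\notin S}\frac12\bigl(1+1\bigr)$ as a dummy to symmetrize the $2^{-n}$. A cleaner option is a Fourier/MacWilliams-style expansion of the function
\[
g(S)=x^{|S|}\,\mathbf 1[S\in\Aell(G)],
\]
taken over the characters $\chi_R(S)=(-1)^{|R\cap S|}$.

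Concretely, I would use the following identity, valid for all $S\subseteq V$:
\[
\mathbf 1[S\in\Aell(G)]\,x^{|S|}\prod_{v\notin S}1
=\prod_{v\in V}\frac{1}{2}\sum_{r_v\in\{0,1\}}\phi_v(r_v,S),
\]
where $\phi_v$ encodes membership of $v$ in $S$ together with the parity condition. Summing over $S$ first for fixed $R=\{v:r_v=1\}$ should give, per vertex $v\notin R$, the factor $1+x(-1)^{d_R(v)+\ell(v)}$-type terms evaluating to $2\pm x$. The sign reconciliation in this step is the main obstacle. If the direct exchange keeps producing a stray global sign $(-1)^{\sum_{v\in T}(\ell(v)+d_T(v))}$, I will try the involution that replaces $R$ by its image under $S\mapsto S\triangle R$.

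Finally, I will sanity-check the chosen expansion in two cases:
\begin{itemize}
\item for $n=1$ with $\ell=0$, both sides must equal $1+x$;
\item for $n=1$ with $\ell=1$, both sides must equal $1$.
\end{itemize}
These checks fix the correct placement of $\ell$. Only then will I write out the general factorization, noting that $0^0=1$ handles the cases $x=0$ and $x=\pm2$.
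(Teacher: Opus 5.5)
You start exactly as the paper does, with the indicator $\prod_{v\in S}\frac12\bigl(1+(-1)^{d_S(v)+\ell(v)}\bigr)$ and the expansion over $T\subseteq S$. However, the proposal stops where the real work is, so the statement is not proved. By freezing $T$ and summing freely over $S\setminus T$ you obtain
\[
M_{\ell,x}(G)=2^{-n}\sum_{T\subseteq V}x^{|T|}(-1)^{\sum_{v\in T}\ell(v)}\prod_{u\in V\setminus T}\bigl(2+(-1)^{d_T(u)}x\bigr).
\]
This is a correct identity, but it is a different, signed one. You then leave the ``sign reconciliation'' as an open obstacle and list candidate remedies ($x\mapsto -x$, a dummy factor equal to $1$, an unspecified $\phi_v$, an involution) without carrying any of them out. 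Neither $x\mapsto -x$ nor inserting trivial factors changes the real defect: in this formula the labels sit on $T$ while the parity condition sits on $V\setminus T$.

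The missing idea is to freeze the other set. Put $R:=S\setminus T$. For $v\in T$ we have $d_S(v)=d_T(v)+d_R(v)$, and $\sum_{v\in T}d_T(v)=2e(G[T])$ is even. So the sign equals $(-1)^{\sum_{v\in T}(d_R(v)+\ell(v))}$, a product over the vertices of $T$ in which each vertex carries both its label and its parity. Re-index by disjoint pairs $(R,T)$ and sum freely over $T\subseteq V\setminus R$:
\[
M_{\ell,x}(G)=\sum_{R\subseteq V}\Bigl(\frac{x}{2}\Bigr)^{|R|}\prod_{v\in V\setminus R}\Bigl(1+\frac{x}{2}(-1)^{d_R(v)+\ell(v)}\Bigr).
\]
Each factor is $\frac{2+x}{2}$ or $\frac{2-x}{2}$ according as $d_R(v)\equiv\ell(v)\pmod 2$ or not, which gives exactly the target; this is the paper's argument. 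Your last fallback, replacing $T$ by $S\triangle T=S\setminus T$, is this fix, but it had to be the main step rather than a contingency.

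The distinction also matters downstream. For $x\in(0,2)$ the target formula is a sum of positive terms, which the Jensen argument in Theorem~\ref{thm:mean} relies on; your signed formula would not support it.
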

\begin{proof}
For a fixed set $S\subseteq V$, the indicator that $S$ is an $\ell$-admissible set is
\[
  \1_{\{S\in\Acal_{\ell}(G)\}}
  =\prod_{v\in S}\frac{1+(-1)^{d_S(v)+\ell(v)}}{2}=2^{-|S|}\prod_{v\in S}\left(1+(-1)^{d_S(v)+\ell(v)}\right).
\]
Indeed, the factor indexed by $v$ equals $1$ when $d_S(v)$ has the same parity as $\ell(v)$, $0$ otherwise. Summing over all $S$ and expanding the product, we obtain

\begin{eqnarray*}
     M_{\ell,x}(G)&=&\sum_{S\in\Acal_{\ell}(G)}x^{|S|}=\sum_{S\subseteq V} x^{|S|} \cdot \1_{\{S\in\Acal_{\ell}(G)\}}\\
  &=&\sum_{S\subseteq V} \left(\frac{x}{2}\right)^{|S|}\prod_{v\in S}\left(1+(-1)^{d_S(v)+\ell(v)}\right)\\
  &=&\sum_{S\subseteq V}\left(\frac{x}{2}\right)^{|S|}
    \sum_{T\subseteq S}(-1)^{\sum_{v\in T}(d_S(v)+\ell(v))}
\end{eqnarray*}
As $\sum_{v\in T}d_S(v)=2e(G[T])+\sum_{v\in T}d_{S\setminus T}(v)$, the above equality gives
\begin{equation}\label{eq:expansion}
     M_{\ell,x}(G)
  =\sum_{S\subseteq V}\left(\frac{x}{2}\right)^{|S|}
    \sum_{T\subseteq S}(-1)^{\sum_{v\in T}(d_{S\setminus T}(v)+\ell(v))}
\end{equation}
Let $R=S\setminus T$. Thus, $S=R\mathbin{\dot\cup}T$.
Re-indexing the sum in \eqref{eq:expansion} by disjoint pairs $(R,T)$ yields
\begin{eqnarray*}
  M_{\ell,x}(G)
  &=&\sum_{R\subseteq V}\left(\frac{x}{2}\right)^{|R|}
    \sum_{T\subseteq V\setminus R}\left(\frac{x}{2}\right)^{|T|}(-1)^{\sum_{v\in T}(d_{R}(v)+\ell(v))}\\
    &=&\sum_{R\subseteq V}\left(\frac{x}{2}\right)^{|R|}
{ \prod_{v\in V\setminus R}
   \left(1+\frac{x}{2}(-1)^{d_R(v)+\ell(v)}\right), }
\end{eqnarray*}
where if $d_R(v)\equiv \ell(v)\pmod 2$, the corresponding factor is $1+\frac{x}{2}$, and otherwise $1-\frac{x}{2}$. Thus,
\begin{eqnarray*}
 M_{\ell,x}(G)&=&\sum_{R\subseteq V}\left(\frac{x}{2}\right)^{|R|}\left(1+\frac{x}{2}\right)^{z_{\ell}(R)}\left(1-\frac{x}{2}\right)^{n-z_{\ell}(R)-|R|}\\
 &=& 2^{-n}\sum_{R\subseteq V}x^{|R|}(2+x)^{z_{\ell}(R)}(2-x)^{n-z_{\ell}(R)-|R|}
\end{eqnarray*}
completing the proof. 
\end{proof}

We now establish a lower bound for $M_{\ell,x}(G)$ when $G$ has no isolated vertices.

\begin{theorem}\label{thm:mean}
{Let $n\geq 1$}. For any $G=(V,E)$ on $n$ vertices with no isolated vertices, $\ell: V\to \{0,1\}$ and $x\in (0,2)$, let $R$ chosen uniformly from the $2^n$ subsets of $~V$. Then, $\E[|R|]=n/2$, $\E [z_{\ell}(R)]= n/4$ and thus 
\[M_{\ell,x}(G)>x^{n/2}(4-x^2)^{n/4}.\] 
\end{theorem}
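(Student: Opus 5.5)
We start from Theorem~\ref{thm:exact} and rewrite it as an expectation over a uniformly random subset $R\subseteq V$:
\[
M_{\ell,x}(G)=\E\bigl[\,x^{|R|}(2+x)^{z_\ell(R)}(2-x)^{n-z_\ell(R)-|R|}\bigr]
=\E\bigl[e^{\phi(R)}\bigr],
\]
where
\[
\phi(R)=|R|\log x+z_\ell(R)\log(2+x)+\bigl(n-z_\ell(R)-|R|\bigr)\log(2-x).
\]
For $x\in(0,2)$ all three bases are positive, so $\phi$ is well defined. The map $t\mapsto e^t$ is convex, so Jensen's inequality gives $\E[e^{\phi}]\ge e^{\E[\phi]}$. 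Since $\phi$ is affine in the pair $(|R|,z_\ell(R))$, it suffices to compute $\E[|R|]$ and $\E[z_\ell(R)]$ and substitute.

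Because each vertex lies in $R$ independently with probability $1/2$, we have $\E|R|=n/2$. For $\E[z_\ell(R)]$, write $z_\ell(R)=\sum_v \1\{v\notin R\}\,\1\{d_R(v)\equiv\ell(v)\}$. The event $v\notin R$ depends only on $v$, while $d_R(v)$ depends only on the status of the neighbours of $v$, which are distinct from $v$. Hence the two events are independent.

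Since $G$ has no isolated vertices, $v$ has at least one neighbour $u$. Conditioning on the status of the other neighbours and flipping whether $u\in R$ shows that $d_R(v)$ is odd or even with probability exactly $1/2$ each. So each term contributes $1/4$, and $\E z_\ell(R)=n/4$. Consequently $\E\bigl[n-z_\ell-|R|\bigr]=n/4$.

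Substituting these expectations,
\[
e^{\E\phi}=x^{n/2}(2+x)^{n/4}(2-x)^{n/4}=x^{n/2}(4-x^2)^{n/4}.
\]

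The remaining point is strictness. Since $e^t$ is strictly convex, Jensen is strict unless $\phi(R)$ is almost surely constant, so it suffices to exhibit two sets with different $\phi$-values. Take $R=\emptyset$ and $R=V$:
\begin{itemize}
\item For $R=V$ we have $z_\ell=0$, so $\phi(V)=n\log x$.
\item For $R=\emptyset$ every $d_R(v)=0$, so $z_\ell(\emptyset)=|\{v:\ell(v)=0\}|=:k$. Then $\phi(\emptyset)=k\log(2+x)+(n-k)\log(2-x)$.
\end{itemize}
If these two values happen to coincide, compare $V$ instead with a singleton $\{v\}$, or use a general argument: $\phi$ is affine in $(|R|,z_\ell(R))$ with coefficients $\log x-\log(2-x)$ on $|R|$ and $\log\frac{2+x}{2-x}>0$ on $z_\ell$. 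For $\phi$ to be constant we would need $z_\ell(R)=\alpha-\beta|R|$ for fixed constants $\alpha,\beta$. Comparing $R=V$ with the sets $V\setminus\{v\}$ determines $\beta$ and forces every vertex to have the same value of $z_\ell(V\setminus\{v\})$. This degenerates in only a few ways, each of which can be ruled out.

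A cleaner way to get strictness is to use $E[\phi]$ together with its variance. Condition on $|R|$ being fixed, say $R$ is random of size $n-1$ versus $R=V$; alternatively, note that $\phi$ takes at least two values because $z_\ell(V)=0$ while $\E z_\ell=n/4>0$ forces some $R$ with $z_\ell(R)>0$. I expect this strictness verification to be the only mildly delicate step. I would try the simplest route first: show that $\phi$ is not almost surely equal to $\E\phi$ by observing that $\phi(V)=n\log x$ differs from $\E\phi=\tfrac n2\log x+\tfrac n4\log(4-x^2)$. These are equal only when $x^2=4-x^2$, i.e.\ $x=\sqrt2$. For that single value of $x$, I would instead compare $\phi(\emptyset)$ against $\E\phi$, or use a singleton.
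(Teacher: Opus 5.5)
Your main line is the paper's argument: write Theorem~\ref{thm:exact} as $\E[e^{\phi(R)}]$ for uniform $R$, compute $\E|R|=n/2$ and $\E z_\ell(R)=n/4$, and apply Jensen. Your computation of $\E z_\ell(R)$ is correct: $v\notin R$ is independent of the parity of $d_R(v)$, and flipping one neighbour $u$ makes that parity uniform.

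The gap is strictness, which is all that distinguishes ``$>$'' from ``$\ge$'', and you leave it unfinished. Your route via $\phi(V)=n\log x\neq\E\phi$ is valid, but only for $x\neq\sqrt2$.

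At $x=\sqrt2$ you only say you ``would'' compare $\phi(\emptyset)$ or a singleton with $\E\phi$. Neither comparison works on its own:
\begin{itemize}
\item $\phi(\emptyset)-\E\phi=(k-\tfrac n2)\log\frac{2+\sqrt2}{2-\sqrt2}$ vanishes whenever exactly half the labels are $0$;
\item $\phi(\{v\})-\E\phi=(z_\ell(\{v\})-\tfrac{n-1}{2})\log\frac{2+\sqrt2}{2-\sqrt2}$ vanishes whenever $z_\ell(\{v\})=\tfrac{n-1}{2}$.
\end{itemize}
They cannot both vanish, since the first forces $n$ even and the second $n$ odd. So your fallback can be rescued, but that has to be shown.

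Your aside that $\phi$ is nonconstant because $z_\ell$ is nonconstant ($z_\ell(V)=0<n/4=\E z_\ell$) is valid only at $x=1$. There the coefficient $\log x-\log(2-x)$ of $|R|$ in $\phi$ vanishes. For other $x$, variations in $|R|$ and in $z_\ell(R)$ could a priori cancel.

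The cleanest repair is the ``general argument'' you started and then abandoned, and it is exactly what the paper does: compare $R=V$ with $R=V\setminus\{v\}$. Since $z_\ell(V)=0$ and $z_\ell(V\setminus\{v\})\in\{0,1\}$, you get $\phi(V\setminus\{v\})-\phi(V)\in\{\log\frac{2-x}{x},\log\frac{2+x}{x}\}$. Both values are nonzero for every $x\neq 1$. At $x=1$ one has $\phi(R)=z_\ell(R)\log 3$, and your observation $z_\ell(V)=0<\E z_\ell$ yields two distinct values of $\phi$. The paper instead takes $R=\emptyset$ if some label is $0$, and $R=\{u\}$ if $\ell=\1$. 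With this two-case check in place, your proof is complete.
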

\begin{proof}
{Note that as $G$ has no isolated vertices, $n\geq 2$.} $\E[|R|]=n/2$ clearly holds as $R$ is chosen uniformly at random from $V$. We now show $\E [z_{\ell}(R)]= n/4$. For any fixed vertex $v\in V$, there are exactly $2^{n-1}$ subsets of $V$ that do not contain $v$. As $v$ is not an isolated vertex one can observe that among these $2^{n-1}$ subsets, exactly one half of them (and therefore $2^{n-2}$ subsets) contain an odd (or even) number of neighbours of $v$. Thus, 
\[\E[z_\ell(R)]=\sum_{v\in V}\mathbb{P}\bigl(v\notin R\text{ and $d_R(v)\equiv \ell(v)\pmod 2$}\bigr)=\sum_{v\in V} \frac{2^{n-2}}{2^n}=n/4.\] 

Now, by Theorem \ref{thm:exact}, 
\begin{eqnarray*}
    M_{\ell,x}(G)&=&\E\left[x^{|R|}(2+x)^{z_{\ell}(R)}(2-x)^{n-z_{\ell}(R)-|R|}\right]\\
    &=&\E\left[e^{|R|\ln(x)+z_\ell(R)\ln(2+x)+(n-z_{\ell}(R)-|R|)\ln(2-x)}\right],
\end{eqnarray*}
where we use the fact that $x\in (0,2)$ (and therefore $x$, $2+x$ and $2-x$ are all positive numbers). { Note that as $n\geq 2$, $X(R)=|R|\ln(x)+z_\ell(R)\ln(2+x)+(n-z_{\ell}(R)-|R|)\ln(2-x)$ is not a constant. In fact, if $x\neq  1$, then $X(V)=n\ln(x)$, and for any vertex $v\in V$, $X(V\setminus \{v\})=(n-1)\ln(x)+\ln(2-x)$ or $(n-1)\ln(x)+\ln(2+x)$. Thus, $X(V\setminus\{v\})-X(V)$ is either $\ln\left(\frac{2-x}{x}\right)$ or $\ln\left(\frac{2+x}{x}\right)$, and neither is zero when $x\neq 1$. If $x= 1$, we have $X(R) = z_\ell(R) \ln 3$ for all $R\subseteq V$. Since $z_\ell(V) = 0$, it is enough to find an R with $z_\ell(R) > 0$: one can use $R= \emptyset$ if some vertex has label 0; if $\ell= \1$, we can choose any vertex $u\in V$ and take $R= \{u\}$ and therefore $z_\ell(R)\geq |N_G(u)|>0$.

Since $|R|\ln(x)+z_\ell(R)\ln(2+x)+(n-z_{\ell}(R)-|R|)\ln(2-x)$ is not a constant and  \(e^x\) is strictly convex, Jensen's inequality gives}
\begin{eqnarray*}
     M_{\ell,x}(G)&=&\E\left[e^{|R|\ln(x)+z_\ell(R)\ln(2+x)+(n-z_{\ell}(R)-|R|)\ln(2-x)}\right]\\
   &>& e^{\E [|R|\ln(x)+z_\ell(R)\ln(2+x)+(n-z_{\ell}(R)-|R|)\ln(2-x)]}\\
   &=& e^{\E[|R|]\ln(x)+\E[z_\ell(R)]\ln(2+x)+\E[n-z_{\ell}(R)-|R|]\ln(2-x)}\\
   &=& e^{\frac{n}{2}\ln(x)+\frac{n}{4}\ln(2+x)+\frac{n}{4}\ln(2-x)}\\
   &=& x^{n/2}(4-x^2)^{n/4},
\end{eqnarray*}
completing the proof.
\end{proof}

{Let $\ell:V\to\{0,1\}$ be a labeling. As $M_{\ell,1}(G)$ equals the number of $\ell$-admissible sets (recall that empty set is counted once), by setting $x=1$ we have the following lower bound for the number of $\ell$-admissible sets in $G$.

\begin{corollary}
If $G=(V,E)$ is an $n$-vertex graph without isolated vertices, then  $M_{\ell,1}(G)>3^{n/4}   \mbox{ for every } \ell:V\to\{0,1\}.$
\end{corollary}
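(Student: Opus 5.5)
The plan is to specialize Theorem~\ref{thm:mean} to $x=1$, which lies in the admissible range $(0,2)$. That theorem applies to any graph $G$ on $n$ vertices without isolated vertices and any labeling $\ell$. Since no isolated vertices forces $n\ge 2$, there are no degenerate cases to handle.

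First I will evaluate the right-hand side of Theorem~\ref{thm:mean} at $x=1$. It becomes $1^{n/2}(4-1^2)^{n/4}=3^{n/4}$, so the theorem gives $M_{\ell,1}(G)>3^{n/4}$ with strict inequality. The strictness is inherited from the strict Jensen step in the proof of Theorem~\ref{thm:mean}, where the case $x=1$ was treated explicitly: $X(R)=z_\ell(R)\ln 3$ is nonconstant because $z_\ell(V)=0$, while $z_\ell(R)>0$ for $R=\emptyset$ or for a singleton $R$.

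Next I will interpret $M_{\ell,1}(G)$. By definition,
\[
M_{\ell,1}(G)=\sum_{S\in\Aell(G)}1^{|S|}=|\Aell(G)|,
\]
which counts the $\ell$-admissible sets, with the empty set counted once. This is exactly the quantity named in the corollary, so the claim follows directly.

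There is no real obstacle here. The only point needing care is the strictness at $x=1$, and the paper has already handled it inside the proof of Theorem~\ref{thm:mean}.
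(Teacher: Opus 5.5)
Your proposal is correct and matches the paper's argument: set $x=1$ in Theorem~\ref{thm:mean}, so the bound becomes $1^{n/2}(4-1)^{n/4}=3^{n/4}$. Strictness at $x=1$ was already established inside that theorem's proof, and $M_{\ell,1}(G)=|\Aell(G)|$ holds by definition.
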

}

\section{The Definition of $\cstar$ and Proof of Theorem~\ref{thm:linear}}\label{sec:2}
To state the resulting coefficient $\cstar$, let
\[
H_2(t):=-t\log_2 t-(1-t)\log_2(1-t)
\qquad (0<t<1)
\]
be the binary entropy function, and define
\[
\Phi(t)
:=\frac{1-2t}{4}\log_2(1-2t)
-\frac{1-t}{2}\log_2\!\left(\frac{1-t}{2}\right)
\qquad (0<t<1/2).
\]
Then, the following holds.

\begin{lemma}
	\label{lem:unique-root}
	The equation $H_2(t)=\Phi(t)$ has exactly one solution in $(0,1/2)$.
\end{lemma}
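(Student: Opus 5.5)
We set $F(t):=H_2(t)-\Phi(t)$ on $(0,1/2)$ and study its sign and monotonicity by elementary calculus. The plan is: compute the one-sided limits at the endpoints to get existence, then compute $F'$ in closed form to get uniqueness.

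\emph{Step 1: endpoint behaviour.} As $t\to 0^+$ we have $H_2(t)\to 0$. In $\Phi$, the first term tends to $0$ and the second tends to $-\tfrac12\log_2\tfrac12=\tfrac12$, so $F(t)\to-\tfrac12$. As $t\to 1/2^-$ we have $H_2(t)\to 1$. The term $\frac{1-2t}{4}\log_2(1-2t)\to 0$, since $u\log u\to 0$, and the second term tends to $-\tfrac14\log_2\tfrac14=\tfrac12$. Hence $F(t)\to\tfrac12$. Because $F$ is continuous on $(0,1/2)$, the intermediate value theorem gives at least one root.

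\emph{Step 2: the derivative.} We have $H_2'(t)=\log_2\frac{1-t}{t}$. Differentiating $\Phi$ term by term (writing $\log_2=\ln/\ln 2$) gives
\[
\Phi'(t)=\frac{1}{2\ln 2}\Bigl(-\ln(1-2t)-1+\ln\tfrac{1-t}{2}+1\Bigr)=\frac{1}{2\ln 2}\ln\frac{1-t}{2(1-2t)} .
\]
Combining the two,
\[
F'(t)=\frac{1}{2\ln 2}\ln\frac{2(1-t)(1-2t)}{t^2}.
\]
Thus $F'(t)>0$ exactly when $2(1-t)(1-2t)>t^2$, that is, when $3t^2-6t+2>0$. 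On $(0,1/2)$ this holds precisely for $t<t_0:=1-1/\sqrt3\approx0.4226$. Consequently $F$ is strictly increasing on $(0,t_0]$ and strictly decreasing on $[t_0,1/2)$.

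\emph{Step 3: conclusion.} On $[t_0,1/2)$, $F$ is strictly decreasing with limit $\tfrac12$ at $1/2^-$, so $F(t)>\tfrac12>0$ there and $F$ has no root in this interval. In particular $F(t_0)>0$. On $(0,t_0]$, $F$ is strictly increasing from the limit $-\tfrac12$ up to $F(t_0)>0$, so it has exactly one zero there. Hence $H_2(t)=\Phi(t)$ has exactly one solution in $(0,1/2)$. This is the solution that will define $c_*$, and it lies in $(0,t_0)$.

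The only real obstacle is carrying out the derivative computation of $\Phi$ cleanly. Once $F'$ is obtained in the logarithmic form above, the sign analysis reduces to a quadratic inequality and the rest is routine.
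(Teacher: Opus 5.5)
Your proof is correct and is essentially the paper's argument: the endpoint values $-\tfrac12$ and $\tfrac12$, the closed form $F'(t)=\frac{1}{2\ln 2}\ln\frac{2(1-t)(1-2t)}{t^2}$, the single critical point $1-1/\sqrt3$, and the increase-then-decrease sign analysis. The only cosmetic difference is that you use one-sided limits, whereas the paper extends $h$ continuously to $[0,1/2]$.
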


\begin{proof}
	Let $h(t)=H_2(t)-\Phi(t)$.  {Since $t\log t\to 0$ as $t\to 0^{+}$,} the function $h$ extends continuously to
	$[0,1/2]$, with
	\[
	h(0)=-\frac12
	\qquad\text{and}\qquad
	h\!\left(\frac12\right)=\frac12.
	\]
	Moreover,
	\[
	h'(t)
	=\log_2\!\left(\frac{1-t}{t}\right)
	-\frac12\log_2\!\left(\frac{1-t}{2(1-2t)}\right)
	=\frac12\log_2\!\left(\frac{2(1-t)(1-2t)}{t^2}\right).
	\]
	Hence $h'(t)=0$ if and only if
	$3t^2-6t+2=0$.  The unique critical point in $(0,1/2)$ is
	$
	\tau=1-\frac{1}{\sqrt3}.
	$
	{Moreover $h'>0$ on $(0,\tau)$ and 
		$h'<0$ on $(\tau,\tfrac12)$. Hence $h$ increases from $h(0)<0$ to its maximum $h(\tau)\ge h(\tfrac12)>0$, crossing zero exactly once, and then decreases to $h(\tfrac12)>0$, remaining positive. This completes the proof.}
\end{proof}

Thus, we can define $c_*$ as the unique solution from Lemma~\ref{lem:unique-root}.
A numerical evaluation {via bisection method} gives 
{\begin{equation}\label{cstarest}
		\cstar\in (0.0958626150,0.0958626151).
\end{equation}} Now, we are ready to prove our main result:

\begin{proof}[Proof of Theorem \ref{thm:linear}]
Set $x=\sqrt\frac{2(1-2\cstar)}{1-\cstar}$ (see Remark \ref{remark1} for a reason to choose this value of $x$). By \eqref{cstarest}, $x\in (1,2)$. Suppose, for a contradiction, that $\foe(G)\le \cstar n$. 
Then there exists a labeling $\ell:V\to \{0,1\}$ such that every $\ell$-admissible set has order at most \(\lfloor \cstar n\rfloor\), and thus as $x>1$, we have
$$
   M_{\ell,x}(G)
   \le \sum_{k=0}^{\lfloor \cstar n\rfloor}\binom nk x^k\le x^{\cstar n}\sum_{k=0}^{\lfloor \cstar n\rfloor}\binom nk \le x^{\cstar n}{ (\cstar) }^{-\cstar n}(1-\cstar)^{-(1-\cstar) n} 
   = 2^{H_2(\cstar) n}x^{\cstar n}
$$
where the third inequality holds as $\sum_{k=0}^{\lfloor \cstar n\rfloor}\binom nk\cstar^{{\cstar n}}(1-\cstar)^{(1-\cstar)n} \le \sum_{k=0}^{\lfloor \cstar n\rfloor}\binom nk\cstar^k(1-\cstar)^{n-k}\leq 1$ (which holds as $\cstar<1/2$). Thus, as $H_2(\cstar)=\Phi(\cstar)$ we have
\begin{eqnarray*}
     M_{\ell,x}(G)&\leq& (2^{\Phi(\cstar)}x^{\cstar})^n\\
     &=& \left((1-2\cstar)^{\frac{1-2\cstar}{4}}x^{\cstar }\big/\left(\frac{1-\cstar}{2}\right)^{\frac{1-\cstar}{2}}\right)^n\\
    &=& \left(\left(\frac{2(1-2\cstar)}{1-\cstar}\right)^{\frac{1-2\cstar}{4} }\left(\frac{2}{1-\cstar}\right)^{\frac{1}{4}}x^{\cstar}\right)^n\\
    &=& \left(x^{\frac{1-2\cstar}{2} }\left(4-x^2\right)^{\frac{1}{4}}x^{\cstar}\right)^n\\
    &=& x^{\frac{n}{2} }\left(4-x^2\right)^{\frac{n}{4}},
\end{eqnarray*}
where the third equality uses the fact that $4-x^2=\frac{2}{1-\cstar}$. However, this contradicts Theorem \ref{thm:mean} and therefore $\foe(G)> \cstar n$. In addition, as $2/21{<0.0953}<\cstar$, $\foe(G)> 2n/21$. \end{proof}

\begin{remark}\label{remark1}
	The choices of $\cstar$ and $x$ may look a bit strange. In fact they are forced, and are determined in the order opposite to the one in which they are used. Fix $c\in(0,1/3)$ and suppose, for a contradiction, that $\foe(G)\le cn$ (we don't lose anything by doing that, as $c_{\rm o}\leq 2/7<1/3$ anyway), so that
	some labeling $\ell$ admits no $\ell$-admissible set of order exceeding
	$\lfloor cn\rfloor$. For every $x\in[1,2)$, the entropy estimate and
	Theorem~\ref{thm:mean} give
	\[
	\sqrt[n]{M_{\ell,x}(G)}\le 2^{H_2(c)}x^{c},
	\qquad
	\sqrt[n]{M_{\ell,x}(G)}> x^{1/2}\bigl(4-x^{2}\bigr)^{1/4},
	\]
	so, taking logarithms, we obtain a contradiction---and hence $\foe(G)>cn$---as
	soon as
	\[
	H_2(c)\;\le\;\Bigl(\tfrac12-c\Bigr)\log_2 x+\tfrac14\log_2\bigl(4-x^{2}\bigr)
	\;=:\;F_c(x).
	\]
	As it suffices that this holds for \emph{some} $x$, the largest value
	of $c$ reachable by the method is the largest one with
	$H_2(c)\le\max_x F_c(x)$. Since $c<1/2$, we have $F_c(x)\to-\infty$ as
	$x\to 0^{+}$ and as $x\to 2^{-}$, while $F_c'(x)=0$ if and only if
	$(1-2c)(4-x^{2})=x^{2}$, that is, if and only if
	\[
	x^{2}=\frac{2(1-2c)}{1-c}.
	\]
	This unique critical point is therefore the maximizer, and it lies in $(1,2)$
	precisely as $c<1/3$. This is the reason for the choice
	$x=\sqrt{2(1-2\cstar)/(1-\cstar)}$. Substituting it into $F_c$ yields
	\[
	\max_{x}F_c(x)=\frac{1-2c}{4}\log_2(1-2c)-\frac{1-c}{2}\log_2\frac{1-c}{2}
	=\Phi(c).
	\]
	Thus the method proves $\foe(G)>cn$ for exactly those $c\in(0,1/3)$ with
	$H_2(c)\le\Phi(c)$. By Lemma~\ref{lem:unique-root} and its proof, $H_2-\Phi$ is
	negative on $(0,\cstar)$ and positive on $(\cstar,1/2)$, so this set is
	$(0,\cstar]$ and the optimum is attained at the equality $H_2(c)=\Phi(c)$, which is how $\cstar$ is defined. 
\end{remark}

\section{Open Problems}\label{sec:disc}

Similarly to $c_{\mathrm{o}}$, let us introduce the universal
worst-labeling constant
$$
  c_{\mathrm{oe}}
  :=\inf\left\{\frac{\foe(G)}{|V(G)|}:G\text{ is a graph without
  isolated vertices}\right\}.
$$
As we discussed in Section \ref{sec:introduction}, $c_{\rm o}\le 2/7$. 
Hence, $\cstar \le c_{\rm oe}\le c_{\rm o}\le 2/7$. Note that the inequality  $\cstar \le c_{\rm oe}$ is non-strict: although
Theorem~\ref{thm:linear} is strict for every individual graph, taking an
infimum need not preserve strictness.  Determining either
$c_{\mathrm{o}}$ or $c_{\mathrm{oe}}$ remains open.  In particular, it is
natural to ask whether $c_{\mathrm{o}}=2/7$ and how much smaller the
worst-labeling constant can be.  

Note that $c_*$ is the largest coefficient obtainable from
the entropy upper bound and the Jensen lower bound in their present
forms.  Any improvement beyond $c_*$ must therefore use additional
information, for example a sharper upper bound on the size distribution of
$\ell$-admissible sets, or a lower bound that exploits more than the first
moments of $|R|$ and $z_\ell(R)$.

Ai et al. \cite{Ai+2026} conjectured
that
$
  \foe(G)\ge\frac12\fo(G)
$
for every graph $G$.  A proof would provide a direct structural relation
between the two universal constants, but it would not by itself determine
either one.


\subsection*{Acknowledgement} YH was partially supported by China Scholarship Council (No.202406200160).

\end{document}